\documentclass[12pt]{article}
\usepackage{amssymb}
\usepackage{amsthm}
\usepackage{amsmath}
\usepackage{graphicx}
\usepackage{enumerate}
\usepackage{pict2e}
\usepackage{framed}

\DeclareMathOperator{\Max}{Max}

\newtheorem{theorem}{Theorem}

\newtheorem{lemma}[theorem]{Lemma}

\newtheorem{remark}[theorem]{Remark}
\newtheorem{example}[theorem]{Example}

\title{A Cayley theorem for posets}
\author{Ivan~Chajda and Helmut~L\"anger}
\date{}

\begin{document}
	
\footnotetext[1]{Corresponding author: Helmut L\"anger}	

\footnotetext[2]{Support of the research of the first author by the Czech Science Foundation (GA\v CR), project 25-20013L, and by IGA, project P\v rF~2026~009, and support of the research of the second author by the Austrian Science Fund (FWF), project 10.55776/PIN5424624, is gratefully acknowledged.}

\maketitle
	
\begin{abstract}
We show that every poset $\mathbf P=(P,\le)$ satisfying the Ascending Chain Condition can be isomorphically embedded into the poset of all mappings from $P$ to the set $A(\mathbf P)$ of all antichains of $\mathbf P$ equipped with a certain partial order relation. This isomorphism is presented explicitly.
\end{abstract}

{\bf AMS Subject Classification:} 06A06, 06A11

{\bf Keywords:} Poset, antichain, mapping, isomorphism, embedding, Ascending Chain Condition

The Cayley theorem is familiarly known to everybody working with groups. It states that every group $(G,\cdot)$ is isomorphic to a subgroup of the group of all permutations on the set $G$. In 1963 C.~Holland \cite H extended this result to lattice-ordered groups as follows: Every lattice-ordered group is isomorphic to an $l$-subgroup of the group of automorphisms of a chain.

A number of authors were interested in Cayley-like theorems for various types of algebras. For Boolean algebras, see \cite{BEM} and for Stone algebras \cite{BE} and \cite H. For algebras connected with non-classical logic based on a quasiordered modification of Boolean algebras it was done by the first author \cite C, and for algebras with binary and nullary operations the result was settled by the authors in \cite{CLb}. Z.~\'Esik presented a modification of the Cayley theorem for ternary algebras in \cite E. For distributive lattices the authors presented a corresponding result in \cite{CLa}.

The aim of this short note is to show that a Cayley-like representation can be developed also for posets.

Let $\mathbf P=(P,\le)$ be a poset. On $2^P$ we introduce the following binary relation $\le$:
\[
B\le C\text{ if for every }x\in B\text{ there exists some }y\in C\text{ with }x\le y
\]
for any subsets $B,C$ of $P$. It is easy to see that if $\mathbf P$ is an antichain then every subset of $P$ is an antichain and $(2^P,\le)=(2^P,\subseteq)$ is a poset. However, if $\mathbf P$ is not an antichain then it contains at least two different comparable elements, say $a<b$, and we have $\{b\}\le\{a,b\}$ and $\{a,b\}\le\{b\}$ showing that $(2^P,\le)$ is not a poset in this case. It turns out that if we restrict the relation $\le$ from the set $2^P$ to the subset $A(\mathbf P)$ of $2^P$ consisting of all antichains of $\mathbf P$ then we can prove the following result:

\begin{lemma}
$\big(A(\mathbf P),\le\big)$ is a poset.
\end{lemma}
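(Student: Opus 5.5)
We have to verify that the relation $\le$ on $2^P$, restricted to $A(\mathbf P)$, is reflexive, transitive and antisymmetric. Reflexivity and transitivity already hold on all of $2^P$ and need nothing about antichains. Antisymmetry is the only place where the antichain hypothesis enters, as the example $\{b\}\le\{a,b\}\le\{b\}$ shows.

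Reflexivity is immediate: for $B\subseteq P$ and $x\in B$ take $y:=x\in B$, so $B\le B$. For transitivity, assume $B\le C$ and $C\le D$ and let $x\in B$. We choose $y\in C$ with $x\le y$, and then $z\in D$ with $y\le z$. Transitivity of $\le$ in $\mathbf P$ gives $x\le z$, so $B\le D$.

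The main step is antisymmetry. Let $B,C\in A(\mathbf P)$ with $B\le C$ and $C\le B$; we show $B\subseteq C$, and the reverse inclusion follows by symmetry. Take $x\in B$. By $B\le C$ there is some $y\in C$ with $x\le y$, and by $C\le B$ there is some $z\in B$ with $y\le z$. Then $x\le z$ with $x,z\in B$. Since $B$ is an antichain, two comparable elements of $B$ must coincide, so $x=z$. Hence $x\le y\le x$, and antisymmetry of $\le$ in $\mathbf P$ gives $x=y\in C$. Thus $B\subseteq C$, and likewise $C\subseteq B$, so $B=C$.

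No real obstacle is expected. The one point needing care is using the antichain property of $B$ (and, for the reverse inclusion, of $C$) to collapse the chain $x\le y\le z$. Only $B$ being an antichain is needed for $B\subseteq C$, and only $C$ being an antichain for the reverse inclusion. The empty antichain needs no separate treatment: $\emptyset\le C$ holds for every $C$, and $C\le\emptyset$ forces $C=\emptyset$.
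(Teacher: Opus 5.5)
Your proof is correct and follows the paper's argument essentially verbatim: the chain $x\le y\le z$ with $x,z\in B$ collapses by the antichain property to give $x=y\in C$, and symmetry yields $B=C$. The only difference is that you write out reflexivity and transitivity explicitly, which the paper simply calls easy to see.
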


\begin{proof}
It is easy to see that $\le$ is reflexive and transitive. Hence it suffices to prove antisymmetry of $\le$. Let $B,C\in A(\mathbf P)$ and assume $B\le C$ and $C\le B$. Suppose $a\in B$. Then, because of $B\le C$, there exists some $b\in C$ with $a\le b$. Since $C\le B$, there exists some $c\in B$ with $b\le c$. Because of $a\le b\le c$ we have $a\le c$. But $a,c\in B$ and $B$ is an antichain of $\mathbf P$. So we conclude $a=c$ and hence $a=b\in C$. This shows $B\subseteq C$. From symmetry reasons we obtain $C\subseteq B$ from which we conclude $B=C$. Thus the binary relation $\le$ on $A(\mathbf P)$ is also antisymmetric and hence a partial order relation.
\end{proof}

We are going to show that the poset $(P,\le)$ can be represented by means of mappings from $P$ to $A(\mathbf P)$.

Let $\mathbf P=(P,\le)$ again be a poset, $a,b\in P$ and $B\subseteq P$ and let $\Max B$ denote the set of all maximal elements of $B$. Observe that $\Max B\in A(\mathbf P)$. We say that $\mathbf P$ satisfies the {\em Ascending Chain Condition} if $\mathbf P$ has no infinite ascending chains. Of course, every finite poset satisfies the Ascending Chain Condition. If $\mathbf P$ satisfies the Ascending Chain Condition and $a\in B$ then there exists some $c\in\Max B$ with $a\le c$ and hence $\Max B\ne\emptyset$. The {\em lower cone} $L(a,b)$ of the elements $a$ and $b$ is defined by
\[
L(a,b):=\{x\in P\mid x\le a\text{ and }x\le b\}.
\]
Further we define a mapping $f_a$ from $P$ to $A(\mathbf P)$ as follows:
\[
f_a(x):=\Max L(a,x)\text{ for all }x\in P.
\]

Now we can prove our main result:

\begin{theorem}\label{th1}
{\rm(}{\bf Cayley theorem for posets}{\rm)} Let $\mathbf P$ be a poset satisfying the Ascending Chain Condition. Then $\mathbf P$ can be isomorphically embedded into the poset $\Big(\big(A(\mathbf P)\big)^P,\le\Big)$ by means of the isomorphism $a\mapsto f_a$.
\end{theorem}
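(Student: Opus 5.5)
To prove Theorem~\ref{th1}, I take the order on $\big(A(\mathbf P)\big)^P$ to be the pointwise one induced by the partial order of Lemma~1: $f\le g$ if $f(x)\le g(x)$ in $\big(A(\mathbf P),\le\big)$ for all $x\in P$. Since Lemma~1 makes $A(\mathbf P)$ a poset, the pointwise relation on $\big(A(\mathbf P)\big)^P$ is again a partial order. First I check that each $f_a$ is well defined: $L(a,x)$ contains no infinite ascending chain, so $\Max L(a,x)$ is an antichain of $\mathbf P$. It may be empty, which happens if $a,x$ have no common lower bound, and the empty set is still an antichain. It then suffices to show that for all $a,b\in P$ we have $a\le b$ if and only if $f_a\le f_b$. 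Injectivity follows automatically from this equivalence together with antisymmetry in $P$, so the map $a\mapsto f_a$ is an order embedding.

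For the direction ``$a\le b\Rightarrow f_a\le f_b$'', fix $x\in P$ and take $y\in f_a(x)=\Max L(a,x)$. Then $y\le a\le b$ and $y\le x$, so $y\in L(b,x)$. By the Ascending Chain Condition, as observed before the theorem, there is some $z\in\Max L(b,x)=f_b(x)$ with $y\le z$. Hence $f_a(x)\le f_b(x)$ for every $x$, that is, $f_a\le f_b$.

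For the converse, I evaluate at $x=a$. Since $L(a,a)$ is the principal ideal of $a$, we get $f_a(a)=\Max L(a,a)=\{a\}$. From $f_a\le f_b$ we obtain $\{a\}\le f_b(a)=\Max L(b,a)$, so there is some $y\in\Max L(b,a)$ with $a\le y$. But $y\le b$ and $y\le a$, so $y=a$ and therefore $a\le b$.

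I do not expect a real obstacle. The only delicate points are using the Ascending Chain Condition to dominate an element of $L(b,x)$ by a maximal one in the forward direction, and accepting the empty antichain as a possible value of $f_a(x)$. The key trick for the converse is simply the choice $x=a$.
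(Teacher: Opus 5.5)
Your proof is correct and follows the paper's argument essentially verbatim: the forward direction uses the Ascending Chain Condition to dominate each element of $L(b,x)$ by an element of $\Max L(b,x)$, and the converse evaluates at $x=a$, using $a\in f_a(a)=\{a\}$ to obtain some $y\in\Max L(b,a)$ with $a\le y\le b$. One small remark: $\Max L(a,x)$ is an antichain for any subset without the Ascending Chain Condition, which is needed only for the domination step.
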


\begin{proof}
Let $\mathbf P=(P,\le)$ and $a,b,c\in P$. We have only to show that $a\le b$ if and only if $f_a\le f_b$. If $a\le b$ and $d\in f_a(c)=\Max L(a,c)$ then $d\in L(a,c)$ and hence $d\in L(b,c)$. Since $\mathbf P$ satisfies the Ascending Chain Condition there exists some $e\in\Max L(b,c)=f_b(c)$ with $d\le e$. This shows $f_a(c)\le f_b(c)$. Since $c$ was an arbitrary element of $P$ we conclude $f_a\le f_b$. Conversely, assume $f_a\le f_b$. Then $a\in \Max L(a,a)=f_a(a)\le f_b(a)=\Max L(b,a)$ and hence there exists some $g\in\Max L(b,a)$ with $a\le g$. Now $a\le g\le b$ showing $a\le b$.
\end{proof}

It should be mentioned that another representation for distributive lattices was developed by the authors in \cite{CLa}.

\begin{example}
Consider the poset $\mathbf P$ visualized in Fig.~1:	

\vspace*{-3mm}

\begin{center}
\setlength{\unitlength}{7mm}
\begin{picture}(6,8)
\put(3,1){\circle*{.3}}
\put(1,3){\circle*{.3}}
\put(5,3){\circle*{.3}}
\put(1,5){\circle*{.3}}
\put(5,5){\circle*{.3}}
\put(3,7){\circle*{.3}}
\put(3,1){\line(-1,1)2}
\put(3,1){\line(1,1)2}
\put(3,7){\line(-1,-1)2}
\put(3,7){\line(1,-1)2}
\put(1,3){\line(0,1)2}
\put(5,3){\line(0,1)2}
\put(1,3){\line(2,1)4}
\put(5,3){\line(-2,1)4}
\put(2.85,.3){$0$}
\put(.35,2.85){$a$}
\put(5.4,2.85){$b$}
\put(.35,4.85){$c$}
\put(5.4,4.85){$d$}
\put(2.85,7.4){$1$}
\put(-.15,-.75){{\rm Figure~1. Finite poset $\mathbf P$}}
\end{picture}
\end{center}

\vspace*{4mm}

Since $\mathbf P$ is finite, it satisfies the Ascending Chain Condition. For every $z\in P$ we compute the mapping $f_z$:
\[
\begin{array}{c|c|c|c|c|c|c}
x & f_0(x) & f_a(x) & f_b(x) & f_c(x)  & f_d(x)  & f_1(x) \\
\hline
0 & \{0\}  & \{0\}  & \{0\}  &  \{0\}  &  \{0\}  & \{0\} \\
a & \{0\}  & \{a\}  & \{0\}  &  \{a\}  &  \{a\}  & \{a\} \\
b & \{0\}  & \{0\}  & \{b\}  &  \{b\}  &  \{b\}  & \{b\} \\
c & \{0\}  & \{a\}  & \{b\}  &  \{c\}  & \{a,b\} & \{c\} \\
d & \{0\}  & \{a\}  & \{b\}  & \{a,b\} &  \{d\}  & \{d\} \\
1 & \{0\}  & \{a\}  & \{b\}  &  \{c\}  &  \{d\}  & \{1\}
\end{array}
\]
Then $\mathbf P$ is isomorphic to the poset depicted in Fig.~2:

\vspace*{-3mm}

\begin{center}
\setlength{\unitlength}{7mm}
\begin{picture}(6,8)
\put(3,1){\circle*{.3}}
\put(1,3){\circle*{.3}}
\put(5,3){\circle*{.3}}
\put(1,5){\circle*{.3}}
\put(5,5){\circle*{.3}}
\put(3,7){\circle*{.3}}
\put(3,1){\line(-1,1)2}
\put(3,1){\line(1,1)2}
\put(3,7){\line(-1,-1)2}
\put(3,7){\line(1,-1)2}
\put(1,3){\line(0,1)2}
\put(5,3){\line(0,1)2}
\put(1,3){\line(2,1)4}
\put(5,3){\line(-2,1)4}
\put(2.75,.3){$f_0$}
\put(.15,2.85){$f_a$}
\put(5.3,2.85){$f_b$}
\put(.15,4.85){$f_c$}
\put(5.3,4.85){$f_d$}
\put(2.75,7.4){$f_1$}
\put(-1.2,-.75){{\rm Figure~2. Poset isomorphic to $\mathbf P$}}
\end{picture}
\end{center}

\vspace*{4mm}

which is a subposet of the poset $\Big(\big(A(\mathbf P)\big)^P,\le\Big)$.

The poset $\big(A(\mathbf P),\le\big)$ of all antichains of $\mathbf P$ is visualized in Fig,~3:

\vspace*{-7mm}

\begin{center}
\setlength{\unitlength}{7mm}
\begin{picture}(6,14)
\put(3,1){\circle*{.3}}
\put(3,3){\circle*{.3}}
\put(1,5){\circle*{.3}}
\put(5,5){\circle*{.3}}
\put(3,7){\circle*{.3}}
\put(1,9){\circle*{.3}}
\put(5,9){\circle*{.3}}
\put(3,11){\circle*{.3}}
\put(3,13){\circle*{.3}}
\put(3,3){\line(-1,1)2}
\put(3,3){\line(1,1)2}
\put(3,3){\line(0,-1)2}
\put(3,11){\line(-1,-1)2}
\put(3,11){\line(0,1)2}
\put(3,11){\line(1,-1)2}
\put(1,5){\line(1,1)4}
\put(5,5){\line(-1,1)4}
\put(2.9,.3){$\emptyset$}
\put(3.4,2.85){$\{0\}$}
\put(5.3,4.85){$\{b\}$}
\put(3.4,6.85){$\{a,b\}$}
\put(5.3,8.85){$\{d\}$}
\put(3.4,10.85){$\{c,d\}$}
\put(-.2,4.85){$\{a\}$}
\put(-.2,8.85){$\{c\}$}
\put(2.5,13.4){$\{1\}$}
\put(-.9,-.75){{\rm Figure~3. The poset $\big(A(\mathbf P),\le\big)$}}
\end{picture}
\end{center}

\vspace*{4mm}

\end{example}

\begin{remark}
Assume that the poset $\mathbf P=(P,\le)$ is a lattice $(P,\vee,\wedge)$. Then $\Max L(a,x)=a\wedge x$ for all $a,x\in P$. Theorem~\ref{th1} ensures that the lattice $\mathbf P$ is isomorphic as a poset to subposet $(\{f_a\mid a\in P\},\le)$ of the poset $\Big(\big(A(\mathbf P)\big)^P,\le\Big)$ and hence $(\{f_a\mid a\in P\},\le)$ itself is a lattice. If $\big(A(\mathbf P),\le\big)$ is a lattice $\big(A(\mathbf P),\vee,\wedge\big)$ {\rm(}which is equivalent to $\Big(\big(A(\mathbf P)\big)^P,\le\Big)$ being a lattice{\rm)} then the mapping $a\mapsto f_a$ need not be a homomorphism from the lattice $(P,\vee,\wedge)$ to the lattice $\Big(\big(A(\mathbf P)\big)^P,\vee,\wedge\Big)$ as the following example shows.
\end{remark}

\begin{example}
Consider the modular lattice $\mathbf M_3=(M_3,\vee,\wedge)$ depicted in Fig.~4:

\vspace*{-3mm}

\begin{center}
\setlength{\unitlength}{7mm}
\begin{picture}(6,6)
\put(3,1){\circle*{.3}}
\put(1,3){\circle*{.3}}
\put(5,3){\circle*{.3}}
\put(3,3){\circle*{.3}}
\put(3,5){\circle*{.3}}
\put(3,1){\line(-1,1)2}
\put(3,1){\line(1,1)2}
\put(3,5){\line(-1,-1)2}
\put(3,5){\line(1,-1)2}
\put(3,1){\line(0,1)4}
\put(2.85,.3){$0$}
\put(.35,2.85){$a$}
\put(5.4,2.85){$c$}
\put(3.4,2.85){$b$}
\put(2.85,5.4){$1$}
\put(-.9,-.75){{\rm Figure~4. Modular lattice $\mathbf M_3$}}
\end{picture}
\end{center}

\vspace*{4mm}

The poset $\big(A(\mathbf M_3),\le\big)$ is visualized in Fig.~5:

\vspace*{-7mm}

\begin{center}
\setlength{\unitlength}{7mm}
\begin{picture}(6,12)
\put(3,1){\circle*{.3}}
\put(3,3){\circle*{.3}}
\put(1,5){\circle*{.3}}
\put(3,5){\circle*{.3}}
\put(5,5){\circle*{.3}}
\put(1,7){\circle*{.3}}
\put(3,7){\circle*{.3}}
\put(5,7){\circle*{.3}}
\put(3,9){\circle*{.3}}
\put(3,11){\circle*{.3}}
\put(3,1){\line(0,1)4}
\put(3,3){\line(-1,1)2}
\put(3,3){\line(1,1)2}
\put(3,5){\line(-1,1)2}
\put(3,5){\line(1,1)2}
\put(3,7){\line(-1,-1)2}
\put(3,7){\line(1,-1)2}
\put(3,9){\line(-1,-1)2}
\put(3,9){\line(1,-1)2}
\put(3,11){\line(0,-1)4}
\put(1,5){\line(0,1)2}
\put(5,5){\line(0,1)2}
\put(2.9,.3){$\emptyset$}
\put(3.4,2.85){$\{0\}$}
\put(-.2,4.85){$\{a\}$}
\put(3.3,4.85){$\{b\}$}
\put(5.3,4.85){$\{c\}$}
\put(-.7,6.85){$\{a,b\}$}
\put(3.3,6.85){$\{a,c\}$}
\put(5.3,6.85){$\{b,c\}$}
\put(3.4,8.85){$\{a,b,c\}$}
\put(2.5,11.4){$\{1\}$}
\put(-1.3,-.75){{\rm Figure~5. The poset $\big(A(\mathbf M_3),\le\big)$}}
\end{picture}
\end{center}

\vspace*{4mm}

In fact, $\big(A(\mathbf M_3),\le\big)$ and hence also $\Big(\big(A(\mathbf M_3)\big)^{M_3},\le\Big)$ is a lattice, the latter being denoted by $\Big(\big(A(\mathbf M_3)\big)^{M_3},\vee,\wedge\Big)$. Consider the following mappings which are elements of $\big(A(\mathbf M_3)\big)^{M_3}$:
\[
\begin{array}{c|c|c|c|c}
x & f_a(x) & f_b(x) & f_1(x) &  f(x) \\
\hline
0 & \{0\}  & \{0\}  & \{0\}  &  \{0\}  \\
a & \{a\}  & \{0\}  & \{a\}  &  \{a\}  \\
b & \{0\}  & \{b\}  & \{b\}  &  \{b\}  \\
c & \{0\}  & \{0\}  & \{c\}  &  \{0\}  \\
1 & \{a\}  & \{b\}  & \{1\}  & \{a,b\} 
\end{array}
\]
Then $a\mapsto f_a$ is not a homomorphism from the lattice $(M_3,\vee,\wedge)$ to the lattice $\Big(\big(A(\mathbf M_3)\big)^{M_3},\vee,\wedge\Big)$ since $f_{a\vee b}=f_1\ne f=f_a\vee f_b$.
\end{example}








Authors' address:

Ivan Chajda \\
Palack\'y University Olomouc \\
Faculty of Science \\
Department of Algebra and Geometry \\
17.\ listopadu 12 \\
771 46 Olomouc \\
Czech Republic \\
ivan.chajda@upol.cz

Helmut L\"anger \\
TU Wien \\
Faculty of Mathematics and Geoinformation \\
Institute of Discrete Mathematics and Geometry \\
Wiedner Hauptstra\ss e 8-10 \\
1040 Vienna \\
Austria, and \\
Palack\'y University Olomouc \\
Faculty of Science \\
Department of Algebra and Geometry \\
17.\ listopadu 12 \\
771 46 Olomouc \\
Czech Republic \\
helmut.laenger@tuwien.ac.at
\end{document}